\def\bold{\boldsymbol}
\def\|{\\ \smallskip}
\newcommand{\bn}{\mbox{$\mathbb{N}$}}
\newcommand{\bz}{\mbox{$\mathbb{Z}$}}
\newcommand{\bv}{\mbox{$  V$}}
\newcommand{\be}{\mbox{$  E$}}
\title{Graphical proof of Ginibre's inequality}
 \author{Yuki Tokushige\thanks{Technical University of Braunschweig, Institut f\"{u}r Mathematische Stochastik,
38106 Universit\"atsplatz 2 Braunschweig, Germany
 {\tt yuki.tokushige@tu-braunschweig.de}}}
\newtheorem{lem}{Lemma}[section]
\newtheorem{prop}[lem]{Proposition}
\newtheorem{thm}{Theorem}
\newtheorem{defi}[lem]{Definition}
\numberwithin{equation}{section}
\newcommand\blfootnote[1]{%
  \begingroup
  \renewcommand\thefootnote{}\footnote{#1}%
  \addtocounter{footnote}{-1}%
  \endgroup
}
\def\slashedarrowfill@#1#2#3#4#5{%
  $\m@th\thickmuskip0mu\medmuskip\thickmuskip\thinmuskip\thickmuskip
   \relax#5#1\mkern-7mu%
   \cleaders\hbox{$#5\mkern-2mu#2\mkern-2mu$}\hfill
   \mathclap{#3}\mathclap{#2}%
   \cleaders\hbox{$#5\mkern-2mu#2\mkern-2mu$}\hfill
   \mkern-7mu#4$%
}
\def\rightslashedarrowfilla@{%
  \slashedarrowfill@\relbar\relbar{\raisebox{1.2pt}{$\scriptscriptstyle\diagup$}}\rightarrow}
\newcommand\xslashedrightarrowa[2][]{%
  \ext@arrow 0055{\rightslashedarrowfilla@}{#1}{#2}}
\def\rightslashedarrowfillb@{%
  \slashedarrowfill@\relbar\relbar/\rightarrow}
\newcommand\xslashedrightarrowb[2][]{%
  \ext@arrow 0055{\rightslashedarrowfillb@}{#1}{#2}}
\def\rightslashedarrowfillc@{%
  \slashedarrowfill@\relbar\relbar{\raisebox{.12em}{\tiny/}}\rightarrow}
\newcommand\xslashedrightarrowc[2][]{%
  \ext@arrow 0055{\rightslashedarrowfillc@}{#1}{#2}}
\tikzset{nomorepostaction/.code=\let\tikz@postactions\pgfutil@empty}
\begin{document}
\maketitle
\begin{abstract}
 In this short note, we will give a new combinatorial proof of Ginibre's inequality for XY models. Our proof is based on multigraph representations introduced by van Engelenburg-Lis (2023) and a new combinatorial bijection.
\end{abstract}

\blfootnote{2020 {\it Mathematics Subject Classification}.  82B20.}
\blfootnote{{\it Key words and phrases}. XY model, correlation inequalty, Ginibre's inequality, random current representation.}

\section{Introduction}
For a finite graph $\Lambda=(V,E)$ and coupling constants $J=(J_{xy}):E\to(0,\infty)$, 
we consider the following Hamiltonian of the XY model: for ${\bold \sigma}=(\sigma_x)_{x\in V}\in(S^1)^V$, 
$$H({\bold \sigma}):=-\sum_{xy\in E}J_{xy}\sigma_x\cdot\sigma_y=-\sum_{xy\in E}\frac{J_{xy}}{2}(\sigma_x\bar{\sigma}_y+\bar{\sigma}_x\sigma_y)=-\sum_{\vec{xy}\in \vec{E}}\frac{J_{xy}}{2} \sigma_x\bar{\sigma}_y,$$
 where\ $S^1=\{z\in\mathbb{C}:|z|=1\}$) and $\vec{E}$ denotes the set of oriented edges of $\Lambda$.
The corresponding Gibbs measure is defined by
\begin{align*}
d\mu_{J,\Lambda}({\bold \sigma}):=\frac{1}{\mathcal{Z}_{J,\Lambda}}e^{-H({\bold \sigma})}d{\bold \sigma},
\end{align*}
where $d{\bold \sigma}=\prod_{x\in V}d\sigma_x$ is the uniform probability measure on $(S^1)^V$.
The normalizing constant $\mathcal{Z}_{J,\Lambda}:=\int_{(S^1)^V}e^{-H({\bold \sigma})}d{\bold \sigma}$ is called the partition function of the model, and by definition $\mu_{J,\Lambda}$
 is a probability measure on $(S^1)^V$. We will use a usual convention to denote the expectation with respect to $\mu_{J,\Lambda}$ by $\langle\cdot\rangle_{J,\Lambda}$.\par
 The XY models on $\mathbb{Z}^2$ are known to exhibit very exotic phase transitions. This was originally proved by Fr\"ohlich-Spencer \cite{FS81}. Recently, van Engelenburg-Lis \cite{XYEL23} and Aizenman-Harel-Peled-Schapiro \cite{AHPS} gave new proofs of this result using the relation between XY models and height models.
See \cite{GS1, GS2, LAMBIJ, LAMDIC} for more recent related works. \par
In order to state the result of this article, we introduce several notations.
For any function $\varphi:V\to\mathbb{Z}$, we will write
$${\bold \sigma}^{\varphi}:=\prod_{x\in V}\sigma_x^{\varphi_x}.$$
Furthermore, for ${\bold \sigma}=(\sigma_x),{\bold \sigma}'=(\sigma_x')\in(S^1)^{V}$
we define $\boldsymbol{\theta}=(\theta_x)_{x\in V},\boldsymbol{\theta}'=(\theta'_x)_{x\in V}$ as elements of
$[0,2\pi)^{ V }$ satisfying 
$$\sigma_x=e^{{\bold i}\theta_x}, \  \sigma'_x=e^{{\bold i}\theta'_x}.$$
We then write
$$\varphi\cdot{\bold \theta}:=\sum_{x\in V}\varphi_x\theta_x,\ \ \varphi\cdot{\bold \theta}':=\sum_{x\in V}\varphi_x\theta_x'.$$
We finally state the claim we will show in this article.
\begin{thm}[\cite{ginibre1970}] \label{thm:aaa}
For $\varphi,\psi:V\to\mathbb{Z}$ with $\sum_{v\in V}\varphi_v=\sum_{v\in V}\psi_v=0$, it holds that
$$\langle \sigma^{\varphi+\psi}\rangle_{J,\Lambda}+\langle \sigma^{\varphi-\psi}\rangle_{J,\Lambda}\geq
2\langle\sigma^{\varphi}\rangle_{J,\Lambda}\langle\sigma^{\psi}\rangle_{J,\Lambda}.$$
Since $\sigma^{\varphi}+\sigma^{-\varphi}=2\cos(\varphi\cdot{\bold \theta})$,
this is equivalent to
$$\langle \cos(\varphi\cdot{\bold \theta})  \cos(\psi\cdot{\bold \theta})\rangle_{J,\Lambda}\geq\langle\cos(\varphi\cdot{\bold \theta})\rangle_{J,\Lambda}\langle\cos(\psi\cdot{\bold \theta})\rangle_{J,\Lambda}.$$
\end{thm}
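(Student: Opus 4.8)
The plan is to prove the first (equivalent) inequality
\[
\langle\sigma^{\varphi+\psi}\rangle_{J,\Lambda}+\langle\sigma^{\varphi-\psi}\rangle_{J,\Lambda}\ \geq\ 2\,\langle\sigma^{\varphi}\rangle_{J,\Lambda}\langle\sigma^{\psi}\rangle_{J,\Lambda}
\]
inside the multigraph representation, with Ginibre's square trick replaced by an explicit bijection; throughout I abbreviate $\langle\cdot\rangle:=\langle\cdot\rangle_{J,\Lambda}$ and $\mathcal Z:=\mathcal Z_{J,\Lambda}$. The first step is to record the van Engelenburg--Lis expansion: expanding $\exp\!\big(\tfrac{J_{xy}}{2}(\sigma_x\bar\sigma_y+\bar\sigma_x\sigma_y)\big)=\sum_{p_{xy},p_{yx}\ge 0}\tfrac{(J_{xy}/2)^{p_{xy}+p_{yx}}}{p_{xy}!\,p_{yx}!}\,\sigma_x^{\,p_{xy}-p_{yx}}\bar\sigma_y^{\,p_{xy}-p_{yx}}$ on every edge and integrating out the spins gives
\[
\mathcal Z\,\langle\sigma^{\varphi}\rangle=\sum_{\mathbf p\in\Omega(\varphi)}w(\mathbf p),\qquad
w(\mathbf p):=\prod_{xy\in E}\frac{(J_{xy}/2)^{p_{xy}+p_{yx}}}{p_{xy}!\,p_{yx}!},
\]
where $\mathbf p=(p_{\vec e})_{\vec e\in\vec E}$ runs over the nonnegative integer flows on $\vec E$ whose divergence equals $\varphi$ (so $\mathcal Z=\sum_{\mathbf p\in\Omega(\mathbf{0})}w(\mathbf p)$), and the hypothesis $\sum_v\varphi_v=\sum_v\psi_v=0$ is precisely what prevents the flow sets below from being empty. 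Since $\mathcal Z>0$ and $\langle\sigma^{\chi}\rangle=\langle\sigma^{\chi}\rangle\langle\sigma^{\mathbf{0}}\rangle$ for every $\chi$, multiplying the claim by $\mathcal Z^2$ turns it into the purely combinatorial inequality
\[
\sum_{\substack{\mathbf p\in\Omega(\varphi+\psi)\\ \mathbf q\in\Omega(\mathbf{0})}}\!\!w(\mathbf p)w(\mathbf q)\ +\ \sum_{\substack{\mathbf p\in\Omega(\varphi-\psi)\\ \mathbf q\in\Omega(\mathbf{0})}}\!\!w(\mathbf p)w(\mathbf q)\ \ \geq\ \ 2\!\!\sum_{\substack{\mathbf p\in\Omega(\varphi)\\ \mathbf q\in\Omega(\psi)}}\!\!w(\mathbf p)w(\mathbf q).
\]

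The heart of the argument is a weight-preserving bijection that is the combinatorial shadow of Ginibre's square trick. I would view a pair $(\mathbf p,\mathbf q)$ as a multigraph in which edge $xy$ carries $p_{xy}+p_{yx}$ oriented ``$\sigma$-strands'' and $q_{xy}+q_{yx}$ oriented ``$\sigma'$-strands'', and map it to a \emph{doubly oriented} multigraph $\mathbf M$ on $V$, in which every strand carries a $u$-arrow and a $v$-arrow, by sending a $\sigma$-strand oriented $x\to y$ to a strand with both arrows $x\to y$, a $\sigma$-strand oriented $y\to x$ to one with both arrows $y\to x$, a $\sigma'$-strand oriented $x\to y$ to one with $u$-arrow $x\to y$ and $v$-arrow $y\to x$, and a $\sigma'$-strand oriented $y\to x$ to one with $u$-arrow $y\to x$ and $v$-arrow $x\to y$. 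If the $m_{xy}$ strands on $xy$ fall into these four types with multiplicities $k_1,\dots,k_4$ and $\mathbf M$ is weighted by $\prod_{xy}\tfrac{(J_{xy}/2)^{m_{xy}}}{k_1!\,k_2!\,k_3!\,k_4!}$, the map is weight-preserving edge by edge, and it is a bijection from $\Omega(a)\times\Omega(b)$ onto the set $\mathcal M(a+b,\,a-b)$ of doubly oriented multigraphs whose $u$-layer has divergence $a+b$ and whose $v$-layer has divergence $a-b$ --- indeed the $u$-flow is the sum of the $\sigma$- and $\sigma'$-flows and the $v$-flow is their difference, mirroring $u=\tfrac{\theta+\theta'}{2}$, $v=\tfrac{\theta-\theta'}{2}$. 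Taking $(a,b)=(\varphi+\psi,\mathbf{0})$, $(\varphi-\psi,\mathbf{0})$ and $(\varphi,\psi)$ identifies the three sums above with the total weights of $\mathcal M(\varphi+\psi,\varphi+\psi)$, $\mathcal M(\varphi-\psi,\varphi-\psi)$ and $\mathcal M(\varphi+\psi,\varphi-\psi)$.

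To conclude, I would sort $\mathcal M(\alpha,\beta)$ by its underlying unoriented multigraph $\mathbf m=(m_{xy})$. The one nonelementary input is that on a fixed $\mathbf m$ the $u$-layer and the $v$-layer can be chosen independently --- for each edge one selects which strands are $u$-forward and then, separately, which are $v$-forward --- which is the Vandermonde-type identity $\sum_{k_1+k_2=P,\ k_1+k_3=Q}\tfrac{m!}{k_1!\,k_2!\,k_3!\,k_4!}=\binom{m}{P}\binom{m}{Q}$ (with $k_4=m-k_1-k_2-k_3$). This yields the factorization
\[
\sum_{\mathbf M\in\mathcal M(\alpha,\beta)}w(\mathbf M)=\sum_{\mathbf m}w_0(\mathbf m)\,c_{\mathbf m}(\alpha)\,c_{\mathbf m}(\beta),\qquad
w_0(\mathbf m):=\prod_{xy}\frac{(J_{xy}/2)^{m_{xy}}}{m_{xy}!},\quad
c_{\mathbf m}(\gamma):=\!\!\sum_{o\,:\,\partial o=\gamma}\ \prod_{xy}\binom{m_{xy}}{o_{xy}},
\]
where $c_{\mathbf m}(\gamma)$ counts the orientations of the labelled strands of $\mathbf m$ having divergence $\gamma$. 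Consequently
\[
\mathcal Z^{2}\Big(\langle\sigma^{\varphi+\psi}\rangle+\langle\sigma^{\varphi-\psi}\rangle-2\langle\sigma^{\varphi}\rangle\langle\sigma^{\psi}\rangle\Big)
=\sum_{\mathbf m}w_0(\mathbf m)\,\big(c_{\mathbf m}(\varphi+\psi)-c_{\mathbf m}(\varphi-\psi)\big)^2\ \geq\ 0,
\]
because $J_{xy}>0$ makes every $w_0(\mathbf m)$ positive; dividing by $\mathcal Z^2>0$ proves the theorem.

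The main obstacle I anticipate is the middle step: verifying that the strand dictionary is indeed a bijection carrying the stated divergences (the $u$- and $v$-layers being the sum and the difference of the two flows) and is weight-preserving edge by edge, and then pinning down the precise Vandermonde identity that decouples the $u$- and $v$-layers over a fixed $\mathbf m$. Once these are in place the conclusion is literally a sum of squares with positive coefficients. A further benefit of staying on the combinatorial side is that it sidesteps the analytic bookkeeping --- the Jacobian and the fundamental domain for $(u,v)$ --- that makes Ginibre's original substitution-based proof delicate.
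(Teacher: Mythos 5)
Your proposal is correct and is essentially the paper's own argument: the strand dictionary sending a pair of flows to its ``sum'' and ``difference'' orientation layers is exactly the bijection $F:\{\partial{\bf r}=f,\partial{\bf b}=g\}_{\bf N}\to\{\partial{\bf r}=f+g\}_{\bf N}\times\{\partial{\bf b}=-f+g\}_{\bf N}$ of Section \ref{3} (up to the immaterial convention of which color gets reversed), and your Vandermonde decoupling over a fixed underlying multigraph is the same factorization of the counts, leading to the identical sum-of-squares conclusion. The only differences are bookkeeping (explicit multinomial weights versus counting colorings of $\mathbb{G}_{\bf N}$ with distinguishable edges), so no further comparison is needed.
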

The result is not new. It is nothing but the celebrated Ginibre's inequality shown in \cite{ginibre1970}. The aim of this article is to give a new combinatorial proof of Theorem \ref{thm:aaa}, which is based on multigraph representations introduced in \cite{XYEL23ar}. 
 The novelty of this work is a new bijection that will play an important role in our proof. 
 It seems to the author an interesting problem to investigate the possibility of applying our combinatorial idea to obtain new correlation inequalities of XY models.
 \par
This paper is organized as follows: in Section \ref{sec2}, we will briefly review random current and multigraph representations for the XY model. In Section \ref{3}, we will introduce a new bijection and prove Theorem \ref{thm:aaa}. 

\section*{Acknowledgements}
The author would like to thank Diederik van Engelenburg for reading an earlier version of the manuscript and giving him suggestions which helped simplify notation and the proof. He would also like to thank Piet Lammers for letting him know multigraph representations introduced in \cite{XYEL23ar}. He is grateful to Takashi Hara and Akira Sakai for their encouragements. 
 He also thanks Tyler Helmuth for his comments.

\section{Graphical representations and Ginibre's inequality}\label{sec2}
\subsection{Random current representations}
In this article, a $\mathbb{Z}_{\geq0}$-valued function ${\bf n}$ defined on the set of oriented edges $\vec{E}$ will be called a {\it current}. For each current ${\bf n}$, we define its {\it amplitude} $|{\bf n}|:E\to\mathbb{Z}_{\geq0}$ by
$$|{\bf n}|_{xy}:={\bf n}_{\vec{xy}}+{\bf n}_{\vec{yx}}.$$
We define the {\it source function} $\partial{\bf n}$ of ${\bf n}$, which is a $\mathbb{Z}$-valued function on $V$, by
$$\partial{\bf n}_x:=\sum_{y\in V:y\sim x}({\bf n}_{\vec{xy}}-{\bf n}_{\vec{yx}}).$$
These functions naturally arise in the study of XY models. To see this, we will compute the partition function $\mathcal{Z}_{J,\Lambda}$ of an XY model with coupling constants $(J_{xy})$ defined on a domain $\Lambda$. Let 
$d{\bold \sigma}=\prod_{x\in V}d\sigma_x$ be the uniform probability measure on $[0,2\pi)^{V}$ and define the weight $w_J({\bf n})$ of a current ${\bf n}$ by
$$w_J({\bf n}):=\prod_{\vec{xy}\in\vec{E}}\dfrac{\left(\frac{ J_{xy} }{2}\right)^{ {\bf n}_{\vec{xy}} }  }{{\bf n}_{\vec{xy}}!  }.$$ 
 Then, 
\begin{align*}
\mathcal{Z}_{J,\Lambda}&:=
\int_{[0,2\pi)^{V}}\exp\left[\sum_{xy\in E}\dfrac{J_{xy}}{2}(\sigma_x\bar{\sigma}_y+\bar{\sigma}_x\sigma_y)\right]d{\bold \sigma}\\
&=\int_{[0,2\pi)^{V}}\exp\left[\sum_{\vec{xy}\in \vec{E}}\dfrac{J_{xy}}{2}\sigma_x\bar{\sigma}_y\right]d{\bold \sigma}\\
&=\int_{[0,2\pi)^{V}}\prod_{\vec{xy}\in \vec{E}}\left(\sum_{{\bf n}_{\vec{xy}}=0}^{\infty}
\dfrac
{
\left[\frac{J_{xy}}{2}\sigma_x\bar{\sigma}_y\right]^{{\bf n}_{\vec{xy}}}
}
{
{\bf n}_{\vec{xy}}!
}
\right)d{\bold \sigma}\\
&=\sum_{{\bf n}=({\bf n}_{\vec{xy}})_{\vec{xy}\in\vec{E}}}w_J({\bf n})
\int_{[0,2\pi)^{V}}\prod_{\vec{xy}\in \vec{E}}
\left(\sigma_x\bar{\sigma}_y\right)^{ {\bf n}_{\vec{xy}} }d{\bold \sigma}\\
&=\sum_{{\bf n}=({\bf n}_{\vec{xy}})_{\vec{xy}\in\vec{E}}}w_J({\bf n})
\prod_{x\in V }\int_{ [0,2\pi) }
\sigma_x^{ \partial{\bf n}_x }d \sigma_x\\
&=\sum_{\partial{\bf n}=0}w_J({\bf n}).
\end{align*}
A straightforward generalization yields the following well-known equality, which gives random current representations of  correlation functions. It is well-known that analogous representations for Ising models are very powerful tools to study their critical phenomena \cite{Aizenman82, AG, ABF, AF, A-DC-S, DC-T1, DC-T2, A-DC}.
\begin{prop}\label{prop:rcr}
For any function $\varphi:V\to\mathbb{Z}$ we have
$$
\langle\sigma^{\varphi}\rangle_{J,\Lambda}
=\frac{1}{\mathcal{Z}_{J,\Lambda}}\sum_{ \partial{\bf n}=-\varphi}w_J({\bf n})
=\frac{1}{\mathcal{Z}_{J,\Lambda}}\sum_{ \partial{\bf n}=\varphi}w_J({\bf n}).
$$
The second equality follows from the fact that $w_J$ is invariant with respect to the map ${\bf n}_{\vec{xy}}\mapsto{\bf n}_{\vec{yx}}$.
\end{prop}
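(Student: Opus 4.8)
The proof is a verbatim repetition of the computation of $\mathcal{Z}_{J,\Lambda}$ given above, carried out in the presence of the extra factor $\sigma^{\varphi}=\prod_{x\in V}\sigma_x^{\varphi_x}$. First I would write
$$\langle\sigma^{\varphi}\rangle_{J,\Lambda}=\frac{1}{\mathcal{Z}_{J,\Lambda}}\int_{[0,2\pi)^{V}}\sigma^{\varphi}\exp\left[\sum_{\vec{xy}\in\vec{E}}\frac{J_{xy}}{2}\sigma_x\bar{\sigma}_y\right]d{\bold \sigma},$$
expand each factor $\exp[\frac{J_{xy}}{2}\sigma_x\bar{\sigma}_y]$ as a power series in ${\bf n}_{\vec{xy}}\in\mathbb{Z}_{\geq0}$, and multiply out over the finitely many oriented edges. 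Since $|\sigma_x\bar{\sigma}_y|=1$, the resulting multiple series is dominated in absolute value by $\prod_{\vec{xy}\in\vec{E}}e^{J_{xy}/2}<\infty$, so Fubini's theorem permits exchanging summation and integration, giving
$$\langle\sigma^{\varphi}\rangle_{J,\Lambda}=\frac{1}{\mathcal{Z}_{J,\Lambda}}\sum_{{\bf n}}w_J({\bf n})\int_{[0,2\pi)^{V}}\sigma^{\varphi}\prod_{\vec{xy}\in\vec{E}}\left(\sigma_x\bar{\sigma}_y\right)^{{\bf n}_{\vec{xy}}}d{\bold \sigma}.$$

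Next I would read off, for each vertex $x$, the exponent of $\sigma_x$ in the integrand: it equals $\varphi_x+\sum_{y\sim x}({\bf n}_{\vec{xy}}-{\bf n}_{\vec{yx}})=\varphi_x+\partial{\bf n}_x$ by the definition of the source function. Using $\int_{[0,2\pi)}\sigma_x^{k}\,d\sigma_x=\ind_{\{k=0\}}$ (orthogonality of characters on $S^1$), the integral factorizes over $V$ and equals $\prod_{x\in V}\ind_{\{\varphi_x+\partial{\bf n}_x=0\}}=\ind_{\{\partial{\bf n}=-\varphi\}}$, which collapses the sum over all currents to the stated sum over $\{{\bf n}:\partial{\bf n}=-\varphi\}$ and proves the first equality. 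For the second equality I would use the edge-reversal involution ${\bf n}\mapsto{\bf n}^{R}$, ${\bf n}^{R}_{\vec{xy}}:={\bf n}_{\vec{yx}}$: using $J_{xy}=J_{yx}$ it preserves $w_J$ (the product in $w_J({\bf n})$ is merely reindexed by the bijection $\vec{xy}\mapsto\vec{yx}$ of $\vec{E}$) while sending $\partial{\bf n}$ to $-\partial{\bf n}$, hence it restricts to a weight-preserving bijection between $\{\partial{\bf n}=-\varphi\}$ and $\{\partial{\bf n}=\varphi\}$.

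I do not expect any genuine obstacle here: the one point deserving a sentence of care is the interchange of the infinite current-sum with the integral, handled by the absolute-convergence bound above, and everything else is the bookkeeping identity $\sum_{y\sim x}({\bf n}_{\vec{xy}}-{\bf n}_{\vec{yx}})=\partial{\bf n}_x$ together with character orthogonality on $S^1$. One may also note in passing that if $\sum_{v\in V}\varphi_v\neq0$ then both sides vanish, consistently with $\langle\sigma^{\varphi}\rangle_{J,\Lambda}=0$ in that case, since $\sum_{x\in V}\partial{\bf n}_x=0$ for every current ${\bf n}$.
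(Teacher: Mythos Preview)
Your proof is correct and is precisely the ``straightforward generalization'' the paper alludes to: expand the exponential, exchange sum and integral, collect the exponent $\varphi_x+\partial{\bf n}_x$ at each vertex, and apply character orthogonality on $S^1$, then use the edge-reversal involution for the second equality. The paper gives no further detail, so your write-up (including the Fubini justification and the remark on the zero-sum condition) only adds rigor to what is left implicit there.
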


\subsection{Multigraph representations and Ginibre's inequality}
In this subsection, we will relate a random current representation that appeared in Proposition \ref{prop:rcr} to a counting of directed multigraphs involving two colors, say, red and blue. This idea was used in \cite{ XYEL23ar, XYEL23}, where the authors analyzed the two dimensional XY model.
We then reformulate Ginibre's inequality using this multigraph representation. \par
Let $\varphi,\psi: V\mapsto\mathbb{Z}$. By Proposition \ref{prop:rcr}, we have
\begin{align}\label{eq:1}
\mathcal{Z}_{J,\Lambda}^2\langle \sigma^{\varphi} \rangle_{J,\Lambda}
&=\sum_{  \substack{\partial{\bf n}=\varphi\\ \ \partial{\bf m}=0   } }w_J({\bf n})w_J({\bf m})\nonumber\\
&=\sum_{ {\bf N }:E\to\mathbb{Z}_{\geq0}  }  \sum_{  \substack{\partial{\bf n}=\varphi\\ \ \partial{\bf m}=0 \\ |{\bf n}+{\bf m}|={\bf N}   } } 
\left[\prod_{ xy\in E} \dfrac{\left(\frac{J_{xy}}{2}\right)^{{\bf N}_{xy}}}{{\bf N}_{xy}!}  \right]
 \left[\prod_{ xy\in E} \dfrac{ {\bf N}_{xy}!}{ {\bf n}_{\vec{xy}}! {\bf n}_{\vec{yx}}! {\bf m}_{\vec{xy}}! {\bf m}_{\vec{yx}}! }  \right].
\end{align}
In the same way, we get
\begin{align}\label{eq:2}
\mathcal{Z}_{J,\Lambda}^2\langle \sigma^{\varphi} \rangle_{J,\Lambda}\langle \sigma^{\psi} \rangle_{J,\Lambda}
&=\sum_{  \substack{\partial{\bf n}=\varphi\\ \ \partial{\bf m}=\psi    } }w_J({\bf n})w_J({\bf m})\nonumber\\
&=\sum_{ {\bf N }:E\to\mathbb{Z}_{\geq0}  }  \sum_{  \substack{\partial{\bf n}=\varphi\\ \ \partial{\bf m}=\psi \\ |{\bf n}+{\bf m}|={\bf N}   } } 
\left[\prod_{ xy\in E} \dfrac{\left(\frac{J_{xy}}{2}\right)^{{\bf N}_{xy}}}{{\bf N}_{xy}!}  \right]
 \left[\prod_{ xy\in E} \dfrac{ {\bf N}_{xy}!}{ {\bf n}_{\vec{xy}}! {\bf n}_{\vec{yx}}! {\bf m}_{\vec{xy}}! {\bf m}_{\vec{yx}}! }  \right].
\end{align}
Let us consider a combinatorial interpretation of 
\begin{align}\label{formula:1}
\prod_{ xy\in E} \dfrac{ {\bf N}_{xy}!}{ {\bf n}_{\vec{xy}}! {\bf n}_{\vec{yx}}! {\bf m}_{\vec{xy}}! {\bf m}_{\vec{yx}}!}.
\end{align} 
To begin with, we define $\mathbb{G}_{\bf N}$ to be an unoriented multigraph which, for each $xy\in E$, has ${\bf N}_{xy}$ distinguishable edges connecting $x$ and $y$. We then interpret \eqref{formula:1} as the number of ways to paint edges of $\mathbb{G}_{\bf N}$ in red and blue and give them orientations in such a way that:
\begin{itemize}
\item for each $xy\in E$ there are $|{\bf n}|_{xy}$ (resp. $|{\bf m}|_{xy}$) red (resp. blue) edges.
\item For each $xy\in E$, among $|{\bf n}|_{xy}$ red edges connecting $x$ and $y$, there are ${\bf n}_{\vec{xy}}$ edges oriented from $x$ to $y$, and ${\bf n}_{\vec{yx}}$ edges oriented from $y$ to $x$. The same property holds for blue edges when we replace ${\bf n}$ with ${\bf m}$.
\end{itemize}
In what follows, for vertices $x,y\in\mathbb{G}_{\bf N}$ we write ${\bf r}_{\vec{xy}}$ (resp. ${\bf b}_{\vec{xy}}$) for the number of red (resp. blue) edges in $\mathbb{G}_{\bf N}$ (with colors and orientation) which are oriented from $x$ to $y$.
Then
the following equalities follow from the above observations and \eqref{eq:1}, \eqref{eq:2}. 
\begin{align}\label{eq:3}
\mathcal{Z}_{J,\Lambda}^2\langle \sigma^{\varphi} \rangle_{J,\Lambda}
&=\sum_{ {\bf N }:E\to\mathbb{Z}_{\geq0}  }  
\left[\prod_{ xy\in E} \dfrac{\left(\frac{J_{xy}}{2}\right)^{{\bf N}_{xy}}}{{\bf N}_{xy}!}  \right]
\cdot
\#\{\partial{\bf r}=\varphi, \partial{\bf b}=0,\ |{\bf r}+{\bf b}|={\bf N}\},\nonumber
\intertext{and} 
\mathcal{Z}_{J,\Lambda}^2\langle \sigma^{\varphi} \rangle_{J,\Lambda}\langle \sigma^{\psi} \rangle_{J,\Lambda}
&=\sum_{ {\bf N }:E\to\mathbb{Z}_{\geq0}  }  
\left[\prod_{ xy\in E} \dfrac{\left(\frac{J_{xy}}{2}\right)^{{\bf N}_{xy}}}{{\bf N}_{xy}!}  \right]
\cdot
\#\{\partial{\bf r}=\varphi, \partial{\bf b}=\psi,\ |{\bf r}+{\bf b}|={\bf N}\}.
\end{align}
For brevity of notation we will write $\{\partial{\bf r}=\varphi, \partial{\bf b}=\psi\}_{\bf N}$ for $\{\partial{\bf r}=\varphi, \partial{\bf b}=\psi,\ |{\bf r}+{\bf b}|={\bf N}\}$, and in the next section we will adopt the same rule for sets of oriented multigraphs involving only one color.\par
Thus, Theorem \ref{thm:aaa} will follow from
\begin{align}\label{goal}
\#\{\partial{\bf r}=\varphi+\psi, \partial{\bf b}=0\}_{\bf N}+\#\{\partial{\bf r}=\varphi-\psi, \partial{\bf b}=0\}_{\bf N}
\geq 2\#\{\partial{\bf r}=\varphi, \partial{\bf b}=\psi\}_{\bf N}
\end{align}
for any ${\bf N}:E\to\mathbb{Z}_{\geq0}$, where $\varphi,\psi:V\to\mathbb{Z}$ are functions with mean zero and $\psi\neq0$.
In Section \ref{3}, we will prove \eqref{goal}.

\section{Combinatorial proof of Ginibre's inequality}\label{3}

We will prove \eqref{goal} by applying a certain bijection to 
$$\{\partial{\bf r}=\varphi+\psi, \partial{\bf b}=0\}_{\bf N}\cup\{\partial{\bf r}=\varphi-\psi, \partial{\bf b}=0\}_{\bf N}\ \ \text{and}\ \ \{\partial{\bf r}=\varphi, \partial{\bf b}=\psi\}_{\bf N}.$$

\subsection{Proof of \eqref{goal} }
For any $f,g:V\to\mathbb{Z}$ with $\sum_{v\in V}f(v)=\sum_{v\in V}g(v)=0$, we will construct a bijection 
\begin{align}\label{bij}
F:\{\partial{\bf r}=f, \partial{\bf b}=g\}_{\bf N} \to \{\partial{\bf r}=f+g\}_{\bf N}\times\{\partial{\bf b}=-f+g\}_{\bf N}.
\end{align}
Once we have this bijection, it immediately follows that
$$\#\{\partial{\bf r}=f, \partial{\bf b}=g\}_{\bf N}=\# \{\partial{\bf r}=f+g\}_{\bf N}\cdot\#\{\partial{\bf b}=-f+g\}_{\bf N}.$$
Then we obtain the inequality \eqref{goal} as follows: 
\begin{align*}
&\ \ \ \#\{\partial{\bf r}=\varphi\pm\psi, \partial{\bf b}=0\}_{\bf N}-
 2\#\{\partial{\bf r}=\varphi, \partial{\bf b}=\psi\}_{\bf N}\nonumber\\
 &=\#\{\partial{\bf r}=\varphi+\psi\}_{\bf N}\cdot\#\{\partial{\bf b}=-\varphi-\psi\}_{\bf N}+\#\{\partial{\bf r}=\varphi-\psi\}_{\bf N}\cdot\#\{\partial{\bf b}=-\varphi+\psi\}_{\bf N}\nonumber\\
 &\hspace{65mm}-2\#\{\partial{\bf r}=\varphi+\psi\}_{\bf N}\cdot\#\{\partial{\bf b}=-\varphi+\psi\}_{\bf N}\nonumber\\
 &=\left(\#\{\partial{\bf r}=\varphi+\psi\}_{\bf N}\right)^2+\left(\#\{\partial{\bf r}=\varphi-\psi\}_{\bf N}\right)^2
 -2\#\{\partial{\bf r}=\varphi+\psi\}_{\bf N}\cdot\#\{\partial{\bf r}=\varphi-\psi\}_{\bf N}\nonumber\\
 &=\left(\#\{\partial{\bf r}=\varphi+\psi\}_{\bf N}-\#\{\partial{\bf r}=\varphi-\psi\}_{\bf N}\right)^2\geq0.
\end{align*}

\subsection{Construction of a bijection \eqref{bij}}
Suppose that we have an element $\omega\in\{\partial{\bf r}=f, \partial{\bf b}=g\}_{\bf N}$.
We define an element 
$$(\omega_{\tt R},\omega_{\tt B})\in \{\partial{\bf r}=f+g\}_{\bf N}\times\{\partial{\bf b}=-f+g\}_{\bf N}$$
 that corresponds to $\omega$ in the following way: we define $\omega_{\tt R}\in\{\partial{\bf r}=f+g\}_{\bf N}$ simply by painting all the edges in $\omega$ in red. We then define $\omega_{\tt B}\in\{\partial{\bf b}=-f+g\}_{\bf N}$ as follows: we 
 retain blue edges in $\omega$ and reverse orientations of red edges and paint all of them in blue. See Figure \ref{fig:aaa}.
It is straightforward to see that this defines a well-defined map 
\begin{align*}
F:\{\partial{\bf r}=f, \partial{\bf b}=g\}_{\bf N} &\to \{\partial{\bf r}=f+g\}_{\bf N}\times\{\partial{\bf b}=-f+g\}_{\bf N}\\
\omega&\mapsto (\omega_{\tt R},\omega_{\tt B}).
\end{align*}

\begin{figure}[h]
 \begin{minipage}[b]{0.33\linewidth}
    \centering
    \includegraphics[width=0.8\textwidth]{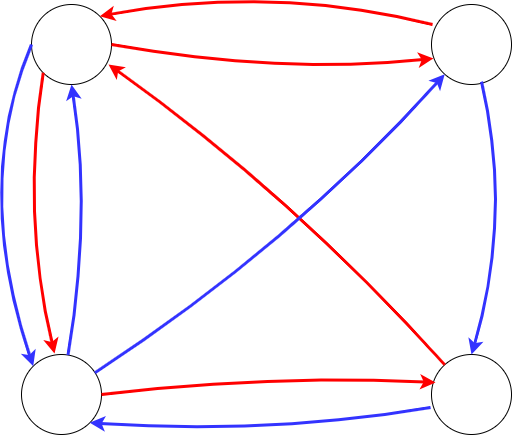}
    \subcaption{$\omega\in\{\partial{\bf r}=\partial{\bf b}=0\}_{\bf N}$}
  \end{minipage}
  \begin{minipage}[b]{0.33\linewidth}
    \centering
    \includegraphics[width=0.8\textwidth]{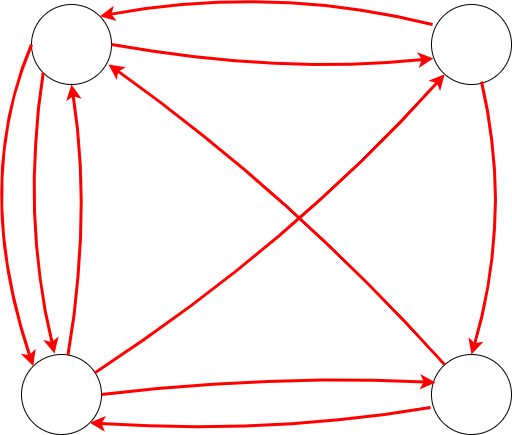}
    \subcaption{$\omega_{\tt R}\in\{\partial{\bf r}=0\}_{\bf N}$}
  \end{minipage}
   \begin{minipage}[b]{0.33\linewidth}
    \centering
    \includegraphics[width=0.8\textwidth]{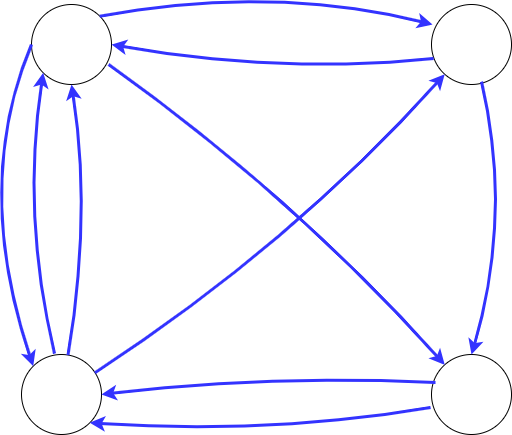}
    \subcaption{$\omega_{\tt B}\in\{\partial{\bf b}=0\}_{\bf N}$}
  \end{minipage}
    \caption{A bijection between $\{\partial{\bf r}=\partial{\bf b}=0\}_{\bf N}\ \text{and}\ \{\partial{\bf r}=0\}_{\bf N}\times\{\partial{\bf b}=0\}_{\bf N}$}\label{fig:aaa}
\end{figure}

We next describe the map in the opposite direction. Suppose that we have an element 
$(\omega_{\tt R},\omega_{\tt B})\in\{\partial{\bf r}=f+g\}_{\bf N}\times\{\partial{\bf b}=-f+g\}_{\bf N}$.
 We then define a new configuration $\tilde{\omega}$ in the following way:
we first obtain an oriented colorless configuration $\xi$ on $\mathbb{G}_{\bf N}$ simply by forgetting the color of $\omega_{\tt R}$. The orientation of $\tilde{\omega}$ is defined by $\xi$.
We then paint edges in $\xi$ according to the following rule: for each oriented edge $\vec{e}$ in $\xi$,
let $e$ be the corresponding unoriented edge in $\mathbb{G}_{\bf N}$. We then look at two oriented edges in $\omega_{\tt R}$ and ${\omega}_{\tt B}$ each of which corresponds to $e$. We paint $\vec{e}$ in blue if they have the same direction, and paint $\vec{e}$ in red otherwise. \par
That this operation defines a well-defined map from 
$\{\partial{\bf r}=f+g\}_{\bf N}\times\{\partial{\bf b}=-f+g\}_{\bf N}$ to $\{\partial{\bf r}=f, \partial{\bf b}=g\}_{\bf N}$
 is verified by the following easy observation: let ${\bf r}(\tilde{\omega})$ (resp. ${\bf b}(\tilde{\omega})$) be the red (resp. blue) configuration in $\tilde{\omega}$. Then by definition, we have
 \begin{align*}
 \partial({\bf r}(\tilde{\omega}))+\partial({\bf b}(\tilde{\omega}))&=f+g,\\
 -\partial({\bf r}(\tilde{\omega}))+\partial({\bf b}(\tilde{\omega}))&=-f+g,
 \end{align*}
 which implies that $\tilde{\omega}$ is in $\{\partial{\bf r}=f, \partial{\bf b}=g\}_{\bf N}$.
Now it is straightforward to verify $F(\tilde{\omega})=(\omega_{\tt R},\omega_{\tt B})$, which implies that
$$F:\{\partial{\bf r}=f, \partial{\bf b}=g\}_{\bf N} \to \{\partial{\bf r}=f+g\}_{\bf N}\times\{\partial{\bf b}=-f+g\}_{\bf N}$$
is indeed a bijection.

\if0
\begin{lem}
Let $\eta_1,\eta_2$ be two sourceless (oriented) configuration defined on the same unoriented graph $\mathbb{G}_{\bf N}$. We define $\eta_1\cap\eta_2$ to be a submultigraph of $\eta_1$ that consists of all edges for which the orientation in $\eta_1$ and $\eta_2$ coincide and vertices connected by them. We then define $\eta_1\setminus\eta_2$ to to be a submultigraph of $\eta_1$ that consists of all edges not in $\eta_1\cap\eta_2$ and vertices connected by them.
 Then both of $\eta_1\cap\eta_2$ and $\eta_1\setminus\eta_2$ are sourceless.
\end{lem}

\begin{proof}
When we have
$$\partial(\eta_1\cap\eta_2)+\partial(\eta_1\setminus\eta_2)\equiv0$$
since $\eta_1$ is sourceless. On the other hand we have
$$\partial(\eta_1\cap\eta_2)-\partial(\eta_1\setminus\eta_2)\equiv0$$
since $\eta_2$ is sourceless. Therefore the claim is verified.
\end{proof}

We have shown that the map $F$
is a bijection. 
\fi

\if0
\subsection{Some remarks}
Suppose that the coupling constants $(J_{xy})_{xy\in E}$ are written as 
$J_{xy}=\beta\tilde{J}_{xy},$
where $\tilde{J}_{xy}>0$ and $\beta>0$ is an inverse temperature. 
Since 
\begin{align}\label{deri}
\dfrac{\partial}{\partial\beta}\langle \sigma_a\cdot\sigma_b \rangle_{J,\Lambda}
&=\beta\sum_{c\sim d}\tilde{J}_{cd}\langle \sigma_a\cdot\sigma_b ; \sigma_c\cdot\sigma_d \rangle_{J,\Lambda}
\nonumber\\
&=\beta\sum_{c\sim d}\tilde{J}_{cd}\left[\langle (\sigma_a\cdot\sigma_b)(\sigma_c\cdot\sigma_d) \rangle_{J,\Lambda}
-\langle \sigma_a\cdot\sigma_b\rangle_{J,\Lambda}\langle \sigma_c\cdot\sigma_d \rangle_{J,\Lambda}\right],
\end{align}
it seems a natural idea to try to apply the aforementioned bijection $F$ for \eqref{deri} in order to obtain differential inequalities for correlation functions, which are known to be very powerful tools to understand critical phenomena for Ising models \cite{Aizenman82, AG, ABF, AF, A-DC-S, DC-T1, DC-T2, A-DC}.
However there is a difficulty in imitating techniques developed for Ising models
and applying them to our setting. It is, for instance, because even the sign of the difference 
$$\#\{\partial{\bf r}=\pm(\delta_a-\delta_b)\pm(\delta_c-\delta_d)\}_{\bf N}
-\#\{\partial{\bf r}=\pm(\delta_a-\delta_b)\mp(\delta_c-\delta_d)\}_{\bf N}$$
depends on the position of $a,b,c,d\in V$ and the structure of $\Lambda$, and therefore we found it difficult to obtain a formula for the derivative \eqref{deri} which is analogous to the one that follows from the source-switching lemma for Ising models.\par
This difficulty seems to be related to the following fact: write ${\rm Re}(\sigma_x)$ (resp. ${\rm Im}(\sigma_x)$) for the real (resp. imaginary) part of the spin $\sigma_x\in\mathbb{C}$.
Then the covariance
\begin{align*}
&\ \ \ \langle (\sigma_a\cdot\sigma_b)(\sigma_c\cdot\sigma_d) \rangle_{J,\Lambda}
-\langle \sigma_a\cdot\sigma_b\rangle_{J,\Lambda}\langle \sigma_c\cdot\sigma_d \rangle_{J,\Lambda}\\
&=\langle {\rm Re}(\sigma_a){\rm Re}(\sigma_b) ;{\rm Re}(\sigma_c){\rm Re}(\sigma_d) \rangle_{J,\Lambda}
+\langle {\rm Im}(\sigma_a){\rm Im}(\sigma_b) ;{\rm Im}(\sigma_c){\rm Im}(\sigma_d) \rangle_{J,\Lambda}\\
&\ \ +\langle {\rm Re}(\sigma_a){\rm Re}(\sigma_b) ;{\rm Im}(\sigma_c){ \rm Im}(\sigma_d) \rangle_{J,\Lambda}
+\langle {\rm Im}(\sigma_a){ \rm Im}(\sigma_b) ;{\rm Re}(\sigma_c){ \rm Re}(\sigma_d) \rangle_{J,\Lambda}
\end{align*}
for XY models consists of a mixture of positive and negative correlations. As a matter of fact, it is shown in \cite{Monroe, KPV} that
\begin{align*}
\langle {\rm Re}(\sigma_a){\rm Re}(\sigma_b) ;{\rm Re}(\sigma_c){\rm Re}(\sigma_d) \rangle_{J,\Lambda}&\geq0,\\
\langle {\rm Im}(\sigma_a){\rm Im}(\sigma_b) ;{\rm Im}(\sigma_c){\rm Im}(\sigma_d) \rangle_{J,\Lambda}&\geq0,\\
\langle {\rm Re}(\sigma_a){\rm Re}(\sigma_b) ;{\rm Im}(\sigma_c){\rm Im}(\sigma_d) \rangle_{J,\Lambda}&\leq0,\\
\langle {\rm Im}(\sigma_a){\rm Im}(\sigma_b) ;{\rm Re}(\sigma_c){\rm Re}(\sigma_d) \rangle_{J,\Lambda}&\leq0.
\end{align*}
This fact seems to make the situation subtle,
although Ginibre's inequality eventually ensures that the sum of positive correlations is still larger than the sum of negative ones.
\fi

\if0
\section{Relation to Ginibre's trick}\label{4}
In this section, we explain computations underlying what we discussed in Section \ref{3}. We begin with proving 
the following lemma, which will be used in subsequent computations.\par
\begin{defi}\label{tilde}
\begin{itemize}
 \item We introduce a subset $\tilde{E}$ of the collection of all oriented edges $\vec{E}$ in $\Lambda$, which is defined as follows: 
 we choose a strict total order on $\bv$ in an arbitrary way and fix it. 
 For each $pq\in\be$, we set 
 $$\widetilde{pq}:=\overrightarrow{(p\wedge q)(p\vee q)}.$$
 By definition, we have $\widetilde{pq}=\widetilde{qp}$.
  We define $\tilde{\be}:=\{\widetilde{pq}\in\vec{\be}:pq\in\be\}$.
 Then it holds that $\#\vec{\be}=2\#\be=2\#\tilde{\be}$.
 In what follows, we may identify functions on $\be$ and those on $\tilde{\be}$.
 \item For $\vec{e}=\overrightarrow{pq}\in\vec{\be}$, define ${\sf rev}(\vec{e}):=\overrightarrow{qp}$.
 \end{itemize}
\end{defi}


\begin{lem}\label{lem:bij}
Let $x,y,z,w\in V$ and ${\bf N}:E\to\mathbb{Z}_{\geq0}$.
Then there is a bijection between the following two sets.
\begin{enumerate}
\item The set of functions ${\bf k}:\tilde{E}\to\bz_{\geq0}$ satisfying $0\leq {\bf k}\leq{\bf N}$ and 
\begin{align}\label{eq:sourceless}
(\delta_x-\delta_y)+(\delta_z-\delta_w)+\sum_{q\in V:\vec{pq}\in\tilde{E} }(2{\bf k}_{\tilde{pq}}-{\bf N}_{\tilde{pq}})+\sum_{q\in V:\vec{qp}\in\tilde{E} }({\bf N}_{\tilde{pq}}-2{\bf k}_{\tilde{pq}})=0\ \ \text{for any}\ p\in V.
\end{align}
\item The set of functions ${\bf r}:\vec{\be}\to\bz_{\geq0}$ satisfying $|{\bf r}|={\bf N}$ and $\partial{\bf r}=-(\delta_x-\delta_y)-(\delta_z-\delta_w)$.
\end{enumerate}
\end{lem}
\begin{proof}
Assume that we have a function ${\bf k}:\tilde{\be}\to\mathbb{Z}_{\geq 0}$ satisfying $0\leq{\bf k}\leq{\bf N}$ and \eqref{eq:sourceless}. We define ${\bf r}=({\bf r}_{\vec{e}}):\vec{\be}\to\bn$ by
\begin{align*}
{\bf r}_{\vec{e}}:=\begin{cases} &{\bf k}_{\vec{e}}\ \ \ \ \ \ \ \ \ \ \ \ \ \ \ \ \ \ \ \ \ \text{if}\ \vec{e}\in\tilde{E},\\
&{\bf N}_{{\sf rev}(\vec{e})}-{\bf k}_{{\sf rev}(\vec{e})}\ \ \ \ \text{if}\ \vec{e}\notin\tilde{\be},
\end{cases}
\end{align*}
Then it is obvious that $|{\bf r}|={\bf N}$. Moreover, the condition \eqref{eq:sourceless} reads
\begin{align*}
(\delta_x-\delta_y)+(\delta_z-\delta_w)+\sum_{q\in V:p<q}({\bf r}_{\vec{pq}}-{\bf r}_{\vec{qp}})
+\sum_{q\in V:p>q}({\bf r}_{\vec{pq}}-{\bf r}_{\vec{qp}})=0\ \ \text{for any}\ p\in V.
\end{align*}
which is exactly the condition $\partial{\bf r}=-(\delta_x-\delta_y)-(\delta_z-\delta_w)$.\par
 Suppose that we are given a function ${\bf r}:\vec{\be}\to\mathbb{Z}_{\geq0}$ with $|{\bf r}|={\bf N}$ and $\partial{\bf r}=-(\delta_x-\delta_y)-(\delta_z-\delta_w)$. Define ${\bf k}:\tilde{\be}\to\mathbb{Z}_{\geq0}$ just by restriction.
 We can show by a similar argument that ${\bf k}$ satisfies $0\leq{\bf k}\leq {\bf N}$ and \eqref{eq:sourceless}.
\end{proof}
We finally discuss computations that inspired combinatorial arguments in Section \ref{3}.
For simplicity, we set $\varphi=\delta_a-\delta_b, \psi=\delta_c-\delta_d$ for $a,b,c,d\in V$.
Recall that $\boldsymbol{\theta}=(\theta_s)_{s\in V},\boldsymbol{\theta}'=(\theta'_s)_{s\in V}\in[0,2\pi)^{ V }$ are defined by 
$$\sigma_s=e^{{\bold i}\theta_s}, \  \sigma'_s=e^{{\bold i}\theta'_s}.$$
We begin with a review of the original proof of Ginibre's inequality given in \cite{ginibre1970}.
We have
\begin{align}\label{comp}
&\ \ \ \mathcal{Z}_{J,\Lambda}^2(\langle \sigma^{\varphi+\psi} \rangle_{J,\Lambda}+\langle \sigma^{\varphi-\psi} \rangle_{J,\Lambda})\nonumber\\
&=
2\int_{([0,2\pi)\times[0,2\pi))^{ V}}
\cos(\theta_a-\theta_b)\cos(\theta_c-\theta_d)
\exp\left[\sum_{xy\in  E}J_{xy}\left(\cos(\theta_x-\theta_y)+\cos(\theta_x'-\theta_y')\right)\right]d{\bold \theta}d{\bold \theta}',\nonumber
\intertext{and}
&\ \ \ \mathcal{Z}_{J,\Lambda}^2\langle \sigma^{\varphi} \rangle_{J,\Lambda}\langle \sigma^{\psi} \rangle_{J,\Lambda}\nonumber\\
&=
\int_{([0,2\pi)\times [0,2\pi))^{ V}}
\cos(\theta_a-\theta_b)\cos(\theta_c'-\theta_d')
\exp\left[\sum_{xy\in  E}J_{xy}\left(\cos(\theta_x-\theta_y)+\cos(\theta_x'-\theta_y')\right)\right]d{\bold \theta}d{\bold \theta}',
\end{align}
where $d{\bold \theta}, d{\bold \theta}'$ are uniform probability measures on $[0,2\pi)^{ V}$.
 Since 
\begin{align*}
&\ \ \ \ \ \exp\left[\sum_{xy\in  E}J_{xy}\left(\cos(\theta_x-\theta_y)+\cos(\theta_x'-\theta_y')\right)\right]\\
&=\sum_{{\bf N}:E\to\mathbb{Z}_{\geq0}}
\prod_{xy\in E}\left[  \dfrac{(J_{xy})^{ {\bf N}_{xy} }}{ {\bf N}_{xy}!}\cdot
\left(\cos(\theta_x-\theta_y)+\cos(\theta_x'-\theta_y')\right)^{{\bf N}_{xy}} \right],
\end{align*}
by comparing the coefficients of $\prod_{xy}(J_{xy})^{ {\bf N}_{xy} }$ in \eqref{eq:3} and \eqref{comp} we obtain 
\begin{align}\label{eq:10}
&\ \ \ \#\{\partial{\bf r}=\pm(\delta_a-\delta_b)\pm(\delta_c-\delta_d), \partial{\bf b}=0\}_{\bf N}\nonumber\\
&=2^{2+\sum{\bf N}_{xy}}\int_{([0,2\pi)\times[0,2\pi))^{ V}}
\cos(\theta_a-\theta_b)\cos(\theta_c-\theta_d)
\prod_{xy\in E}\left[\cos(\theta_x-\theta_y)+\cos(\theta_x'-\theta_y')\right]^{ {\bf N}_{xy} }d{\bold \theta}d{\bold \theta}',\ \text{and}\nonumber\\
&\ \ \ \#\{\partial{\bf r}=\pm(\delta_a-\delta_b), \partial{\bf b}=\pm(\delta_c-\delta_d)\}_{\bf N}\nonumber\\
&=2^{2+\sum{\bf N}_{xy}}\int_{([0,2\pi)\times[0,2\pi))^{ V}}
\cos(\theta_a-\theta_b)\cos(\theta_c'-\theta_d')
\prod_{xy\in E}\left[\cos(\theta_x-\theta_y)+\cos(\theta_x'-\theta_y')\right]^{ {\bf N}_{xy} }d{\bold \theta}d{\bold \theta}'.
\end{align}
In the rest of this section, we will compute
\begin{align}\label{comp:inta}
&\ \int_{((0,2\pi]\times[0,2\pi))^{  V  }   }
\left(\cos(\theta_a-\theta_b)-\cos(\theta'_a-\theta'_b)\right)\left(\cos(\theta_c-\theta_c)-\cos(\theta'_c-\theta'_d)\right)\nonumber\\
&\hspace{70mm}\cdot\prod_{xy\in E}
\left[ \cos(\theta_x-\theta_y) +\cos(\theta'_x-\theta'_y) \right]^{{\bf N}_{xy}}
d\boldsymbol{\theta} d\boldsymbol{\theta}',
\end{align}
which, by \eqref{eq:10}, is equal to
\begin{align}\label{eq:11}
&\ \ \ 2\cdot2^{-2-\sum{\bf N}_{xy}}
\left(\#\{\partial{\bf r}=\pm(\delta_a-\delta_b)\pm(\delta_c-\delta_d), \partial{\bf b}=0\}_{\bf N}-\#\{\partial{\bf r}=\pm(\delta_a-\delta_b), \partial{\bf b}=\pm(\delta_c-\delta_d)\}_{\bf N}\right)\nonumber\\
&=4\cdot2^{-2-\sum{\bf N}_{xy}}\left(\#\{\partial{\bf r}=\delta_a-\delta_b\pm(\delta_c-\delta_d), \partial{\bf b}=0\}_{\bf N}-\#\{\partial{\bf r}=\delta_a-\delta_b, \partial{\bf b}=\pm(\delta_c-\delta_d)\}_{\bf N}\right).
\end{align}
Since $\cos \eta+\cos\eta'=2\cos\frac{\eta+\eta'}{2}\cos\frac{\eta-\eta'}{2}$, $\cos\eta-\cos\eta'=-2\sin\frac{\eta+\eta'}{2}\sin\frac{\eta-\eta'}{2}$, we have
\begin{align}
\eqref{comp:inta}&=2^{2+\sum {\bf N}_{xy}}
\int_{((-\pi,\pi]^2)^{  V  }   }
\left(\sin\frac{\theta_{ab}+\theta'_{ab}}{2}\sin\frac{\theta_{ab}-\theta'_{ab}}{2}\right)\cdot\left(\sin\frac{\theta_{cd}+\theta'_{cd}}{2}\sin\frac{\theta_{cd}-\theta'_{cd}}{2}\right)\nonumber\\
&\hspace{70mm}\cdot\prod_{xy\in E}
\left[ \cos\frac{\theta_{xy}+\theta'_{xy}}{2}\cos\frac{\theta_{xy}-\theta'_{xy}}{2} \right]^{ {\bf N}_{xy}}
d\boldsymbol{\theta} d\boldsymbol{\theta}',\nonumber
\end{align}
where $\theta_{zw}:=\theta_z-\theta_w,\ \theta'_{zw}:=\theta'_z-\theta'_w$ for $z,w\in V$.\par
Define 
\begin{align}\label{def:F}
F(\boldsymbol{\eta}):=\sin\frac{\eta_{ab}}{2}\sin\frac{\eta_{cd}}{2}\prod_{xy\in E}\left[\cos\frac{\eta_{xy}}{2}\right]^{{\bf N}_{xy}},
\end{align}
then we have
$$\eqref{comp:inta}=2^{2+\sum {\bf N}_{xy}}\int_{((-\pi,\pi]^2)^{ V  }   }
F\left(\bold{u}+\bold{v}\right)F\left(\bold{u}-\bold{v}\right)\frac{d\boldsymbol{u}}{(2\pi)^{\#V } } \frac{d\boldsymbol{v}}{(2\pi)^{\# V }  },$$
where $d\boldsymbol{u},d\boldsymbol{v}$ are Lebesgue measures on $\mathbb{R}^{ V}$.
Let $\boldsymbol{\alpha}:=\boldsymbol{u}+\boldsymbol{v} ,\ \boldsymbol{\beta}:=\boldsymbol{u}-\boldsymbol{v}$, then we have
\begin{align*}
\eqref{comp:inta}=2^{2+\sum {\bf N}_{xy}}\cdot 
\left(\frac{1}{2}\right)^{\# V}
\int_{\mathbb{D}^{  V}} F\left(\bold{\alpha}\right)F(\bold{\beta})
\frac{d\bold{\alpha}}{ (2\pi)^{\# V} }\frac{ d\bold{\beta} }{ (2\pi)^{\# V} },
\end{align*}
where $\mathbb{D}\subset\mathbb{R}^2$ is the square $\{(2\pi,0),(0,2\pi),(-2\pi,0),(0,-2\pi)\}$.
Taking account of the parity of sines and cosines, it is straightforward to check that the integrand $F(\bold{u}+\bold{v})F(\bold{u}-\bold{v})$ is a periodic function with period $2\pi$ in each coordinate of ${\bold u}, {\bold v}$ though each of $F(\bold{u}+\bold{v})$ and $F(\bold{u}-\bold{v})$ is merely periodic with period $4\pi$.
Therefore, we get
\begin{align*}
\eqref{comp:inta}&=
2^{2+\sum {\bf N}_{xy}}\cdot 
\left(\frac{1}{2}\right)^{\# V}
\int_{\mathbb{D}^{ V}} F\left(\bold{\alpha}\right)F(\bold{\beta})
\frac{d\bold{\alpha}}{ (2\pi)^{\# V} }\frac{ d\bold{\beta} }{ (2\pi)^{\# V} }\\
&=
2^{2+\sum {\bf N}_{xy}}\cdot\left(\frac{1}{4}\right)^{\# V}
\left(
\int_{(-2\pi,2\pi]^{ V } } F(\bold{\eta})\dfrac{d\bold{\eta}}{ (2\pi)^{\# V} }\right)^2\\
&=2^{2+\sum {\bf N}_{xy}}\cdot\left(\int_{(-2\pi,2\pi]^{ V } } F(\bold{\eta})
\dfrac{d\bold{\eta}}{ (4\pi)^{\# V} }
\right)^2\geq0,
\end{align*}
where $d\bold{\eta}$ is a Lebesgue measure on $(-2\pi,2\pi]^{ V } $, which means that
$\frac{d\bold{\eta}}{ (4\pi)^{\# V} }$ is a uniform probability measure on the same space.
This is a famous trick introduced in \cite{ginibre1970} which yields Ginibre's inequality.
See \cite[page 316-317]{ginibre1970} for more detailed discussions.\par
Therefore, in order to compute \eqref{comp:inta}	 it suffices to compute
\begin{align}\label{comp:int-3}
\int_{((-2\pi,2\pi])^{ V } } \sin\frac{\eta_{ab}}{2}\sin\frac{\eta_{cd}}{2}\prod_{xy\in  E}\left[\cos\frac{\eta_{xy}}{2}\right]^{{\bf N}_{xy}}\dfrac{d\bold{\eta}}{ (4\pi)^{\# V} }.
\end{align}
Letting $\bold{\phi}=2\bold{\eta}$, we get
\begin{align*}
\eqref{comp:int-3}&=
\int_{((-\pi,\pi])^{ V } } \sin\phi_{ab}\sin\phi_{cd}\prod_{xy\in E}\left[\cos\phi_{xy}\right]^{{\bf N}_{xy}}d\bold{\phi},
\intertext{where $d\bold{\phi}$ is a uniform probability measure (not a Lebesgue measure) on $((-\pi,\pi])^{ V }$. Thus}
&=
\int_{((-\pi,\pi])^{ V } } 
\left(\frac{ e^{ \boldsymbol{i}(\phi_a-\phi_b) }-e^{-\boldsymbol{i}(\phi_a-\phi_b) }  }{2\boldsymbol{i}}\right)
\left(\frac{ e^{ \boldsymbol{i}(\phi_c-\phi_d) }-e^{-\boldsymbol{i}(\phi_c-\phi_d) } }{2\boldsymbol{i}}\right)\\
&\hspace{80mm}\cdot\prod_{xy\in  E}\left[\frac{e^{ \boldsymbol{i}(\phi_x-\phi_y) }+e^{-\boldsymbol{i}(\phi_x-\phi_y) }  }{2}\right]^{{\bf N}_{xy}}d\bold{\varphi}\\
&=-
2^{-2-\sum {\bf N}_{xy}}
\int_{((-\pi,\pi])^{ V } } 
\left(e^{ \boldsymbol{i}(\phi_a-\phi_b) }-e^{-\boldsymbol{i}(\phi_a-\phi_b) }  \right)
\left( e^{ \boldsymbol{i}(\phi_c-\phi_d) }-e^{-\boldsymbol{i}(\phi_c-\phi_d) } \right)\\
&\hspace{80mm}\cdot\prod_{xy\in  E}\left[ e^{ \boldsymbol{i}(\phi_x-\phi_y) }+e^{-\boldsymbol{i}(\phi_x-\phi_y) }  \right]^{ {\bf N}_{xy}}d\bold{\phi}.
\end{align*}
Let us compute
\begin{align}\label{comp:int-44}
&\int_{((-\pi,\pi])^{ V } } 
\left(e^{ \boldsymbol{i}(\phi_a-\phi_b) }-e^{-\boldsymbol{i}(\phi_a-\phi_b) }  \right)
\left( e^{ \boldsymbol{i}(\phi_c-\phi_d) }-e^{-\boldsymbol{i}(\phi_c-\phi_d) } \right)
\cdot\prod_{pq\in E}\left[ e^{ \boldsymbol{i}(\phi_x-\phi_y) }+e^{-\boldsymbol{i}(\phi_x-\phi_y) }  \right]^{{\bf N}_{xy}}d\bold{\phi}.
\end{align}
We first compute the integrand of \eqref{comp:int-44}.
\begin{align}\label{ccc}
&\ \ \left(e^{ \boldsymbol{i}(\phi_a-\phi_b) }-e^{-\boldsymbol{i}(\phi_a-\phi_b) }  \right)
\left( e^{ \boldsymbol{i}(\phi_c-\phi_d) }-e^{-\boldsymbol{i}(\phi_c-\phi_d) } \right)
\cdot\prod_{xy\in  E}\left[ e^{ \boldsymbol{i}(\phi_x-\phi_y) }+e^{-\boldsymbol{i}(\phi_x-\phi_y) }  \right]^{ {\bf N}_{xy}}\nonumber\\
&=\sum_{ \substack{ 0\leq{\bf k}\leq{\bf N}  }  }\binom{\bf N}{\bf k}
 \left(e^{ \boldsymbol{i}(\phi_a-\phi_b) }-e^{-\boldsymbol{i}(\phi_a-\phi_b) }  \right)
\left( e^{ \boldsymbol{i}(\phi_c-\phi_d) }-e^{-\boldsymbol{i}(\phi_c-\phi_d) } \right)\nonumber\\
&\hspace{75mm}\cdot\prod_{xy\in E} \left(e^{ \boldsymbol{i}(\phi_x-\phi_y) }\right)^{{\bf k}_{xy}}\left(e^{-\boldsymbol{i}(\phi_x-\phi_y) }\right)^{{\bf N}_{xy}-{\bf k}_{xy}}\nonumber\\
&=\sum_{ \substack{ 0\leq{\bf k}\leq{\bf N}  }  }\binom{\bf N}{\bf k}
\left(e^{ \boldsymbol{i}(\phi_a-\phi_b) }-e^{-\boldsymbol{i}(\phi_a-\phi_b) }  \right)
\left( e^{ \boldsymbol{i}(\phi_c-\phi_d) }-e^{-\boldsymbol{i}(\phi_c-\phi_d) } \right)\nonumber\\
&\hspace{75mm}\cdot\prod_{p\in  V}
\left(    \left( e^{ \boldsymbol{i}\varphi_p }   \right)^{\sum_{q\in  V:\vec{pq}\in\tilde{  E}  }(2{\bf k}_{\tilde{pq}}-{\bf N}_{\tilde{pq}})  +\sum_{q\in  V:\vec{qp}\in\tilde{  E} }({\bf N}_{\tilde{pq}}-2{\bf k}_{\tilde{pq}})     }                \right)\nonumber\\
&=\sum_{ \substack{ 0\leq{\bf k}\leq{\bf N}  }  }\binom{\bf N}{\bf k}
\left(e^{  \pm\boldsymbol{i}(\phi_a-\phi_b) \pm\boldsymbol{i}(\phi_c-\phi_d)}- e^{  \pm\boldsymbol{i}(\phi_a-\phi_b) \mp\boldsymbol{i}(\phi_c-\phi_d)     }\right)\nonumber\\
&\hspace{70mm}\cdot\prod_{p\in  V}
\left(    \left( e^{ \boldsymbol{i}\phi_p }   \right)^{\sum_{q\in  V:\vec{pq}\in\tilde{  E}  }(2{\bf k}_{\tilde{pq}}-{\bf N}_{\tilde{pq}})  +\sum_{q\in  V:\vec{qp}\in\tilde{  E} }({\bf N}_{\tilde{pq}}-2{\bf k}_{\tilde{pq}})     }                \right)\nonumber\\
&=\sum_{ \substack{ 0\leq{\bf k}\leq{\bf N}  }  }\binom{\bf N}{\bf k}
\prod_{p\in  V}
\left(    \left( e^{ \boldsymbol{i}\phi_p }   \right)^{\pm(\delta_a-\delta_b)\pm(\delta_c-\delta_d)+\sum_{q\in  V:\vec{pq}\in\tilde{  E}  }(2{\bf k}_{\tilde{pq}}-{\bf N}_{\tilde{pq}})  +\sum_{q\in  V:\vec{qp}\in\tilde{  E} }({\bf N}_{\tilde{pq}}-2{\bf k}_{\tilde{pq}})     }                \right)\nonumber\\
&\ \ -\sum_{ \substack{ 0\leq{\bf k}\leq{\bf N}  }  }\binom{\bf N}{\bf k}
\prod_{p\in  V}
\left(    \left( e^{ \boldsymbol{i}\phi_p }   \right)^{\pm(\delta_a-\delta_b)\mp(\delta_c-\delta_d)+\sum_{q\in  V:\vec{pq}\in\tilde{  E}  }(2{\bf k}_{\tilde{pq}}-{\bf N}_{\tilde{pq}})  +\sum_{q\in  V:\vec{qp}\in\tilde{  E} }({\bf N}_{\tilde{pq}}-2{\bf k}_{\tilde{pq}})     }                \right)
\end{align}
Recall that for any $\bold{m}=(m_p)\in\mathbb{Z}^{V }$ we have
\begin{align*}
\int_{(-\pi,\pi]^{  V}}\prod_{p}e^{ \boldsymbol{i}m_p\phi_p }d\bold{\phi}
=\prod_{p}\left(\int_{(-\pi,\pi]} e^{ \boldsymbol{i}m_p\phi_p }d\phi_p    \right)=
\begin{cases}
1\ \ \ \text{if}\ \bold{m}\equiv \bold{0},\\
0\ \ \ \text{otherwise}.
\end{cases}
\end{align*}
Therefore, by \eqref{ccc} and Lemma \ref{lem:bij}  we have
\begin{align*}
\eqref{comp:int-44}&=\#\{\partial{\bf r}=\pm(\delta_a-\delta_b)\pm(\delta_c-\delta_d)\}_{\bf N}-\#\{\partial{\bf r}=\pm(\delta_a-\delta_b)\mp(\delta_c-\delta_d)\}_{\bf N}\\
&=2\#\{\partial{\bf r}=\delta_a-\delta_b+\delta_c-\delta_d\}_{\bf N}-2\#\{\partial{\bf r}=\delta_a-\delta_b-(\delta_c-\delta_d)\}_{\bf N},
\end{align*}
which implies that
\begin{align}\label{comp:end}
\eqref{comp:inta}&=
4\cdot2^{-2-\sum {\bf N}_{xy}}\cdot
\left(\{ \partial{\bf r}= \delta_a-\delta_b+\delta_c-\delta_d \}_{\bf N}  -  \{ \partial{\bf r}= \delta_a-\delta_b-(\delta_c-\delta_d) \}_{\bf N}  \right)^2.
\end{align}
By \eqref{eq:11} and \eqref{comp:end}, we get
\begin{align*}
&\ \ \ \#\{\partial{\bf r}=\delta_a-\delta_b\pm(\delta_c-\delta_d), \partial{\bf b}=0\}_{\bf N}-\#\{\partial{\bf r}=\delta_a-\delta_b, \partial{\bf b}=\pm(\delta_c-\delta_d)\}_{\bf N}\\
&=\left(\#\{ \partial{\bf r}= \delta_a-\delta_b+\delta_c-\delta_d \}_{\bf N}  -  \#\{ \partial{\bf r}= \delta_a-\delta_b-(\delta_c-\delta_d) \}_{\bf N}  \right)^2.
\end{align*}
Furthermore, by computing
$$\int_{([0,2\pi)\times [0,2\pi))^{ V}}
\exp\left[\sum_{xy\in  E}J_{xy}\left(\cos(\theta_x-\theta_y)+\cos(\theta_x'-\theta_y')\right)\right]d{\bold \theta}d{\bold \theta}'$$
in the same way, we can obtain
$$\#\{\partial{\bf r}=\partial{\bf b}=0\}_{\bf N}=(\#\{\partial{\bf r}=0\}_{\bf N})^2.$$
These equalities are exactly same as what we obtained by a bijective argument in Section \ref{3}.
\fi

\medskip
{\sf Data Availability}\ \ Data sharing not applicable to this article as no datasets were generated or analyzed during the current study.\\\\
\medskip
{\sf Conflict of Interest}\ \ There is no conflict of interest.

\if0
\begin{align}
&\ \ \int_{([0,2\pi)\times [0,2\pi))^{  V  }    }
\prod_{xy\in  E}
\left[\cos(\theta_x-\theta_y)+\cos(\theta_x'-\theta_y')\right]^{{\bf N}_{xy}}
d{\bold \theta} d{\bold \theta}'\nonumber\\
&=2^{\sum {\bf N}_{xy}}
\left( \int_{((-2\pi,2\pi])^{  V } }\prod_{xy\in  E}\left[\cos\frac{\eta_{xy}}{2}\right]^{ {\bf N}_{xy}}\dfrac{d\bold{\eta}}{(4\pi)^{\#  V}}\right)^2\nonumber\\
&=2^{-\sum {\bf N}_{xy}}
\left(\{\partial{\bf r}=0\}_{\bf N}   \right)^2.
\end{align}

Therefore, by \eqref{eq:rewrite} we have
\begin{align*}
&\ \ \#\{\partial{\bf r}=\pm(\delta_x-\delta_y)\pm(\delta_z-\delta_w),\ \partial{\bf b}=0\}_{\bf N}-
\#\{\partial{\bf r}=\pm(\delta_x-\delta_y),\ \partial{\bf b}=\pm(\delta_z-\delta_w)\}_{\bf N}\\
&=\frac{1}{2}\left(\sum_{ \substack{ {\bf r}:\ |{\bf r}|={\bf N}\\ \partial{\bf r}= \pm(\delta_x-\delta_y)\pm(\delta_z-\delta_w) }     }\binom{\bf N}{\bf r}-\sum_{ \substack{ {\bf r}:\ |{\bf r}|={\bf N}\\ \partial{\bf r}= \pm(\delta_x-\delta_y)\mp(\delta_z-\delta_w) }     }\binom{\bf N}{\bf r}\right)^2\geq0
\intertext{and}
&\ \ \#\{\partial{\bf r}=\partial{\bf b}=0 ; {\bf N}\}=\left( \sum_{   \substack{ {\bf r}:\ |{\bf r}|={\bf N}\\\partial{\bf r}=0 }   }\binom{\bf N}{\bf r}\right)^2.
\end{align*}
\fi

\bibliographystyle{plain}
\bibliography{BIB}

\begin{thebibliography}{10}

\bibitem{Aizenman82}
M.~Aizenman.
\newblock Geometric analysis of $\varphi^4$ fields and ising models. i, ii.
\newblock {\em Comm. Math. Phys.}, 86(1):1--48, 1982.

\bibitem{ABF}
M.~Aizenman, D.J. Barsky, and R.~Fern\'andez.
\newblock The phase transition in a general class of ising-type models is
  sharp.
\newblock {\em J. Statist. Phys.}, 47(3-4):343--374, 1987.

\bibitem{A-DC}
M.~Aizenman and H.~Duminil-Copin.
\newblock Marginal triviality of the scaling limits of critical 4d ising and
  $\phi_4^4$ models.
\newblock {\em Ann. of Math.}, 2(1):163--235, 2021.

\bibitem{A-DC-S}
M.~Aizenman, H.~Duminil-Copin, and V.~Sidoravicius.
\newblock Random currents and continuity of ising model's spontaneous
  magnetization.
\newblock {\em Comm. Math. Phys.}, 334(no. 2):719--742, 2015.

\bibitem{AF}
M.~Aizenman and R.~Fern\'andez.
\newblock On the critical behavior of the magnetization in high-dimensional
  ising models.
\newblock {\em J. Statist. Phys.}, 44(3-4):393--454, 1986.

\bibitem{AG}
M.~Aizenman and R.~Graham.
\newblock On the renormalized coupling constant and the susceptibility in
  $\varphi_4^4$ field theory and the ising model in four dimensions.
\newblock {\em Nuclear Phys. B}, 225(2):261--288, 1983.

\bibitem{AHPS}
M.~Peled~R. Aizenman, M.~Harel and J.~Schapiro.
\newblock Depinning in integer-restricted gaussian fields and bkt phases of
  two-component spin models.
\newblock {\em arXiv:2110.09498}.

\bibitem{XYEL23ar}
M.~Lis D.~van Engelenburg.
\newblock An elementary proof of phase transition in the planar xy model.
\newblock {\em arXiv:2110.09465}.

\bibitem{XYEL23}
M.~Lis D.~van Engelenburg.
\newblock An elementary proof of phase transition in the planar xy model.
\newblock {\em Comm. Math. Phys.}, 399(1):85--104, 2023.

\bibitem{DC-T1}
H.~Duminil-Copin and V.~Tassion.
\newblock A new proof of the sharpness of the phase transition for bernoulli
  percolation and the ising model.
\newblock {\em Comm. Math. Phys.}, 343(2):725--745, 2016.

\bibitem{DC-T2}
H.~Duminil-Copin and V.~Tassion.
\newblock A new proof of the sharpness of the phase transition for bernoulli
  percolation on $\mathbb{Z}^d$.
\newblock {\em Enseign. Math.}, 62(1-2):199--206, 2016.

\bibitem{FS81}
J.~Fr{\"o}hlich and T.~Spencer.
\newblock The kosterlitz-thouless transition in two-dimensional abelian spin
  systems and the coulomb gas.
\newblock {\em Comm. Math. Phys.}, 81(no. 4):527--602, 1981.

\bibitem{GS2}
C.~Garban and A.~Sep{\'u}lveda.
\newblock Statistical reconstruction of the gaussian free field and kt
  transition.
\newblock {\em Journal of Eur. Math. Soc. (to appear)}.

\bibitem{GS1}
C.~Garban and A.~Sep{\'u}lveda.
\newblock Quantitative bounds on vortex fluctuations in 2d coulomb gas and
  maximum of the integer-valued gaussian free field.
\newblock {\em Proc. London Math. Soc.}, 127(3):653--708, 2023.

\bibitem{ginibre1970}
J.~Ginibre.
\newblock General formulation of griffiths' inequalities.
\newblock {\em Comm. Math. Phys.}, 16:310--328., 1970.

\bibitem{LAMBIJ}
P~Lammers.
\newblock Bijecting the bkt transition.
\newblock {\em arXiv:2301.06905}.

\bibitem{LAMDIC}
P~Lammers.
\newblock A dichotomy theory for height functions a dichotomy theory for height
  functions a dichotomy theory for height functions.
\newblock {\em arXiv:2211.14365}.

\end{thebibliography}

\end{document}